\def\Znum{{\mathbb{Z}}} 
\def\Qnum{{\mathbb{Q}}} 
\newtheorem{theorem}{Theorem}[section]
\newtheorem{lemma}[theorem]{Lemma}
\newtheorem{corollary}[theorem]{Corollary}
\newtheorem{proposition}[theorem]{Proposition}
\begin{document}

\title{Signed Difference Sets}

\author{
Daniel M.\ Gordon\\
IDA Center for Communications Research\\
4320 Westerra Court\\
San Diego, CA 92121\\
\tt{gordon@ccr-lajolla.org}
}

\date{\today}

\maketitle

\abstract{
A $(v,k,\lambda)$ difference set in a group $G$ of order $v$ is a subset
$\{d_1, d_2, \ldots,d_k\}$ of $G$ such that $D=\sum d_i$ in the group
ring $\Znum[G]$ satisfies 
$$D D^{-1} = n + \lambda G,$$
where
$n=k-\lambda$.
If $D=\sum s_i d_i$, where the $s_i \in \{ \pm 1\}$, satisfies the
same equation, we will call it a {\em signed difference set}.  

This generalizes both difference sets (all $s_i=1$) and circulant
weighing matrices ($G$ cyclic and $\lambda=0$).  We will show that
there are other cases of interest, and give some results on their existence.
}

\section{Introduction}

Let $G$ be a finite group $G$ of order $v$, and
$\Znum[G]=\{\sum_{g\in G}a_gg\mid a_g\in \Znum\}$ denote the group ring of $G$
over $\Znum$.  
We will identify a set $D=\{d_1, d_2, \ldots,d_k\} \subset G$ with its
group ring element $\sum_{d_i\in D} d_i$.
A $k$-subset $D$ of $G$ is a 
$(v,k,\lambda)$ difference set if
\begin{equation}\label{eq:ds}
D D^{-1} = n + \lambda G,  
\end{equation}
where $n=k-\lambda$.
There is a large literature on difference sets.  See, for example,
\cite{bjl} or  \cite{crcds}.

A circulant weighing matrix $W(n,k)$ is a square $n \times n$ $\{-1,0,1\}$ matrix, where each row
is a right cyclic shift of the one before, and the matrix satisfies the
equation
\begin{equation}\label{eq:cwm}
W W^{T} = kI_n,  
\end{equation}
where $W^T$ is the transpose of $T$.  
See \cite{tan} for background on circulant weighing matrices.
The set of locations of the $1$'s in the first row is denoted by
$P$, and the locations of the $-1$'s by $N$.  
It is well known that they only exist when $k=s^2$ is a perfect
square, and without loss of generality we have $\lvert P \rvert =(k+s)/2$, and $\lvert N \rvert =(k-s)/2$.
Writing (\ref{eq:cwm}) as an equation in the group ring of 
$G = \Znum_n$, we have
\begin{equation}\label{eq:pn}
(P-N)(P-N)^{-1} = k.  
\end{equation}

The website \cite{ljcr} contains an online database with existence
results for difference sets and circulant weighing matrices over a
wide range of parameters.

Group developed weighing matrices, the noncyclic analogue of circulant weighing matrices, have been
considered, see  \cite{ad99} and \cite{arasu2013group}.

If 
$$D=\sum s_i d_i = P-N,$$
where the $s_i \in \{ \pm 1\}$, satisfies (\ref{eq:ds}),
we will call it a {\em signed difference set}.  
As for circulant weighing matrices, we will adopt the convention that $\lvert P \rvert  \geq \lvert N \rvert $.
This generalizes both difference sets ($N=\emptyset$) and group developed
weighing matrices ($\lambda=0$).  We will show that
there are other cases of interest, and give some results on their
existence.

For group elements $g$ not in $P$ or $N$ we will set $s_g=0$, so that 
$D = \sum_{g \in G} s_g g$.

For example, Theorem~\ref{thm:v-1} in the next section shows that
$P=\{1,2,4\}$ and $N=\{3,5,6\}$ form a $(7,6,-1)$ signed difference
set in $\Znum_7$.

\begin{lemma}\label{lem:p}
  Suppose $D = P-N$ is a signed difference set
Then $s = \sqrt{\lambda (v-1)+k}$ must be an integer, and
$$
\lvert P \rvert  = \frac{k + s}{2}.
$$
and
$$
\lvert N \rvert  = \frac{k - s}{2}.
$$

\end{lemma}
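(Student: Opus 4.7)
The plan is to apply the augmentation map $\varepsilon \colon \Znum[G] \to \Znum$ (sending $\sum a_g g \mapsto \sum a_g$) to both sides of the defining equation $D D^{-1} = n + \lambda G$, and then combine the resulting numerical identity with the obvious count $|P|+|N|=k$.

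First I would record the two ingredients. On the one hand, since inversion preserves coefficients, $\varepsilon(D^{-1}) = \varepsilon(D) = |P| - |N|$, so $\varepsilon(D D^{-1}) = (|P|-|N|)^2$. On the other hand, $\varepsilon(n + \lambda G) = n + \lambda v = (k-\lambda) + \lambda v = k + \lambda(v-1)$. Equating these gives
\[
(|P|-|N|)^2 \;=\; k + \lambda(v-1) \;=\; s^2.
\]
Since the left side is the square of an integer, $s^2$ is a non-negative perfect square and $s = |P|-|N|$ is an integer (using the convention $|P|\geq|N|$ to fix the non-negative root).

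Finally I would use the fact that $D = P - N$ has exactly $k$ nonzero coefficients (by definition of the parameter $k$ counting the terms $s_i d_i$), so $|P|+|N|=k$. Solving the linear system $|P|+|N|=k$, $|P|-|N|=s$ yields $|P| = (k+s)/2$ and $|N| = (k-s)/2$, completing the proof.

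There is really no hard step here; the only thing worth flagging is that one must explicitly use $|P|+|N|=k$ (so that $k$ counts signed elements, not just positive ones), and that the integrality of $s$ comes for free because the left-hand side is visibly the square of the integer $|P|-|N|$ rather than requiring an arithmetic argument.
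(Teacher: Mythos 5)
Your proof is correct and is essentially the paper's argument in slightly different clothing: applying the augmentation map to $DD^{-1}=n+\lambda G$ is exactly the ``count the (signed) differences'' computation the paper sketches, yielding $(\lvert P\rvert-\lvert N\rvert)^2=k+\lambda(v-1)=s^2$, after which both proofs solve the same linear system with $\lvert P\rvert+\lvert N\rvert=k$. The augmentation-map phrasing is a clean way to package the counting, but it is not a different route.
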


\begin{proof}
The lemma follows from counting the number of nonzero differences,
which must sum to $\lambda (v-1)$, and then completing the square.
\end{proof}

Note that the definition of $s$, $P$ and $N$ 
match up with the terminology for circulant weighing matrices 
and difference sets.
For the former, this is the well-known fact that $k=s^2$ must be a perfect square.
For the latter, $s=k$, and this gives the counting condition 
$\lambda (v-1) = k (k-1)$.

Signed difference sets also can be thought of as periodic sequences, with
applications to signal design.
The {\em periodic autocorrelation} of a
sequence $s$ with period $v$ is the collection of inner products
$$
\theta (\tau) = \sum_{i=0}^{v-1} s_{t+\tau} s^*_{t}
$$
where $*$ denotes complex conjugation, and the sum $t+\tau$ is mod $v$.
A $\{-1,1\}$ periodic sequence with 
$\theta(\tau) = C$ for $1\leq \tau < v$, is said to have {\em
  two-level autocorrelation}.  Such sequences correspond to cyclic
difference sets.
If $C=-1$ for $v$ odd or $0$ for $v$ even, the sequence is said to
have {\em ideal} two-level autocorrelation.
See \cite{golomb2005signal} and
\cite{helleseth1998sequences} for details.  In \cite{HuGong2010}
ternary sequences with entries in $\{-1,0,2\}$ with two-level
autocorrelation are constructed.

\section{Simple Examples}\label{sec:ex}

We begin with some trivial constructions.  We will exclude $(v,v,v)$
(every element of $G$ with sign 1) and $(v,v,v-4)$ (every element of
$G$, one with sign $-1$ and the rest $1$).

\begin{proposition}\label{thm:v}
If a $(v,k,\lambda)$ difference set exists in $G$, then a
$(v,v,v-4n)$ signed difference set exists.
\end{proposition}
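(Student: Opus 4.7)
The plan is to exhibit an explicit signed difference set by flipping signs on the complement of $D$. Specifically, let $D$ be the given $(v,k,\lambda)$ difference set and consider the group ring element
$$E = 2D - G = \sum_{g \in D} g - \sum_{g \notin D} g,$$
so that $P = D$ and $N = G \setminus D$ (or swap them if $k < v/2$, which only changes $E$ to $-E$ and leaves $EE^{-1}$ unchanged). Then $|P|+|N| = v$, matching the target parameter $k' = v$.

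Next I would compute $EE^{-1}$ directly in $\Znum[G]$. The key identities are the difference set equation $DD^{-1} = n + \lambda G$, together with the standard absorbing behavior of the all-elements sum: $G^{-1} = G$, $DG = GD^{-1} = kG$, and $G^2 = vG$. Expanding
$$EE^{-1} = (2D - G)(2D^{-1} - G) = 4DD^{-1} - 2DG - 2GD^{-1} + G^2,$$
and substituting these gives
$$EE^{-1} = 4(n + \lambda G) - 2kG - 2kG + vG = 4n + (4\lambda - 4k + v)G = 4n + (v - 4n)G.$$

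Finally, I would match this against the signed difference set definition: setting $k' = v$ and $\lambda' = v - 4n$, one has $n' = k' - \lambda' = 4n$, so $EE^{-1} = n' + \lambda' G$, which is exactly equation (\ref{eq:ds}) for parameters $(v, v, v - 4n)$. A small sanity check via Lemma~\ref{lem:p} is also immediate: $\lambda'(v-1) + k' = (v-4n)(v-1) + v$ should be a perfect square equal to $(|P| - |N|)^2 = (2k - v)^2$, which follows from $k - \lambda = n$.

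There is no serious obstacle here; the only point requiring care is the sign convention $|P| \geq |N|$ from Lemma~\ref{lem:p}, which is handled by replacing $E$ with $-E$ if necessary. Everything else is a two-line group-ring calculation using the absorbing property of $G$.
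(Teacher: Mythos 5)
Your construction is identical to the paper's (take $P=D$ and $N=G\setminus D$), and your computation is correct, but your verification runs along a genuinely different track. The paper treats $N$ as the complementary $(v,\,v-k,\,v-2k+\lambda)$ difference set, expands $(P-N)(P-N)^{-1}$ into the four sums $\sum_{i,j\in P}$, $\sum_{i,j\in N}$, and the two cross sums, and counts the coefficient of each nonzero group element directly: $\lambda$ from the first, $v-2k+\lambda$ from the second, and $v-\bigl(\lambda+(v-2k+\lambda)\bigr)$ from the cross terms because every nonzero element has exactly $v$ representations as a difference $i-j$ over all of $G$. You instead write $P-N=2D-G$ and reduce everything to the ring identities $DG=GD^{-1}=kG$, $G^2=vG$, and $DD^{-1}=n+\lambda G$. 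Your route is shorter and avoids having to invoke (or re-derive) the parameters of the complementary difference set and the ``$v$ representations'' counting fact; the paper's route makes the per-coefficient bookkeeping explicit, which is closer in spirit to how the later, harder existence proofs in the paper are carried out. Two small remarks: the sign normalization $\lvert P\rvert \ge \lvert N\rvert$ that you flag is indeed just a convention and does not affect $EE^{-1}$; and your final sanity check actually needs the counting identity $\lambda(v-1)=k(k-1)$ in addition to $n=k-\lambda$, though since the main computation already establishes the result, nothing hinges on it.
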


\begin{proof}
Let $P$ be the $(v,k,\lambda)$ difference set $D$, and $N$ be
the complement of $D$, which is a 
$(v,v-k,v-2k+\lambda)$ difference set.
We have
$$
(P-N) (P-N)^{-1}  = \sum_{i,j\in P} (i-j) +\sum_{i,j\in N} (i-j) 
- \left(\sum_{i\in P,j\in N} (i-j) + \sum_{i\in N,j\in P} (i-j)\right) 
$$
The coefficient of any nonzero element is $\lambda$ from the first
term, $v-2k+\lambda$ from the second, and 
$v-(\lambda + (v-2k+\lambda))$ from the last two (since there are $v$
ways to express any nonzero element as $i-j$, and for each either $i$
and $j$ are in the same set or not).  This adds to
$$
\lambda + (v-2k+\lambda) - \left( v-\lambda-(v-2k+\lambda) \right)  =
v-4(k-\lambda) = v-4n.
$$
\end{proof}

These are also binary periodic sequences with two-level
autocorrelation.  The sequences corresponding to $(4n-1,2n-1,n-1)$ difference sets
have ideal autocorrelation.

\begin{proposition}\label{thm:v-1}
  For $v$ an odd prime, taking 
$P$ to be the squares modulo $v$ 
and
$N$ the nonsquares
gives a $(v,v-1,-1)$ signed difference set.
\end{proposition}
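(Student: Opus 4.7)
The plan is to reinterpret $P-N$ via the quadratic character (Legendre symbol) modulo $v$ and reduce the group-ring identity to a standard character-sum evaluation. Let $\chi$ be the Legendre symbol mod $v$, extended by $\chi(0)=0$. Since $P$ is the set of nonzero squares and $N$ the set of nonsquares, in $\Znum[\Znum_v]$ we have
$$
D = P - N = \sum_{t\in\Znum_v}\chi(t)\,t,
$$
and $|P|+|N| = v-1$, matching $k=v-1$. We need $DD^{-1} = v - G$ (since $n=k-\lambda = v-1-(-1)=v$ and $\lambda G = -G$), i.e., coefficient $v-1$ at the identity and $-1$ at every nonzero group element.

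Working additively in $\Znum_v$, I would expand
$$
D\,D^{-1} \;=\; \sum_{s,t}\chi(s)\chi(t)(s-t) \;=\; \sum_{g\in\Znum_v}\!\Bigl(\sum_{t}\chi(t+g)\chi(t)\Bigr)\,g,
$$
so the coefficient of $g$ is $c_g := \sum_{t}\chi(t)\chi(t+g)$. For $g=0$, $c_0 = \sum_{t\ne 0}\chi(t)^2 = v-1$, as required. For $g\ne 0$, I would restrict to $t\notin\{0,-g\}$ (the only $t$ giving $\chi(t)\chi(t+g)=0$) and write
$$
c_g \;=\; \sum_{t\ne 0,\,-g}\chi\!\bigl(t(t+g)\bigr) \;=\; \sum_{t\ne 0,\,-g}\chi(t^2)\,\chi\!\bigl(1+g/t\bigr) \;=\; \sum_{t\ne 0,\,-g}\chi\!\bigl(1+g/t\bigr).
$$

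The substitution $u = g/t$ is a bijection from $\{t\in\Znum_v^\times : t\ne -g\}$ to $\{u\in\Znum_v^\times : u\ne -1\}$, so writing $w=1+u$ the sum becomes $\sum_{w\ne 0,1}\chi(w)$. Using the standard fact $\sum_{w\in\Znum_v}\chi(w)=0$ (equal numbers of QRs and QNRs), this equals $-\chi(1) = -1$. Hence $c_g = -1$ for every $g\ne 0$, giving $DD^{-1} = (v-1)\cdot 1 - \sum_{g\ne 0}g = v - G = n + \lambda G$, which verifies (\ref{eq:ds}) with $(k,\lambda)=(v-1,-1)$.

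The only nontrivial ingredient is the character-sum evaluation for $g\ne 0$; everything else is bookkeeping in the group ring. Since this character sum is completely standard (it is the same identity underlying the Paley construction, phrased without splitting into the residue classes $v\equiv 1,3\pmod 4$), I do not anticipate a real obstacle. The one place that warrants care is keeping track of the inversion $t\mapsto -t$ in the group ring: it would be clean to note explicitly that $D^{-1}=\sum_t\chi(t)(-t)=\chi(-1)D$, so that $DD^{-1}=\chi(-1)D^2$ and the same coefficient computation applies uniformly in both congruence classes of $v$ mod $4$.
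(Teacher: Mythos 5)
Your proposal is correct, and it takes a genuinely different route from the paper. The paper works on the Fourier side: it identifies $P-N$ with the quadratic Gauss sum $G_v(\chi)=\sum_r \left(\frac{r}{v}\right)e^{2\pi i r/v}$, invokes Gauss's evaluation to get $G_v(\chi)\overline{G_v(\chi)}=v$, uses the linear independence of $\{e^{2\pi i r/v}\}$ over $\Qnum$ (which requires $v$ prime) to conclude that all non-identity coefficients of $DD^{-1}$ are equal to a single constant $\lambda$, and only then pins down $\lambda=-1$ via the counting identity of Lemma~\ref{lem:p}. You instead compute every coefficient of $DD^{-1}$ directly in the group ring, reducing the coefficient at $g\neq 0$ to the complete character sum $\sum_t\chi(t)\chi(t+g)=-1$, evaluated by the standard substitution $u=g/t$. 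Your calculation is correct throughout (including the identity coefficient $v-1$ and the bookkeeping $n=k-\lambda=v$), and your closing remark that $D^{-1}=\chi(-1)D$ is a clean way to handle the inversion uniformly. What each approach buys: yours is more elementary and self-contained --- it needs only $\sum_w\chi(w)=0$, avoids Gauss's (genuinely deep) sign determination, which the paper quotes but does not actually need since only $\lvert G_v(\chi)\rvert^2=v$ is used, and it delivers $\lambda=-1$ directly without the auxiliary counting lemma. The paper's Fourier-side argument, on the other hand, is the template that generalizes to the fourth-power-residue construction of Section~\ref{sec:res}, where the analogous direct character-sum computation would be considerably messier; keeping the quadratic case in the same framework makes Corollary~\ref{cor:g} read as a natural continuation.
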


\begin{proof}
Write $G = \Znum_v$ as $\left\{ e^{2\pi i r/v}, r =
0,1,\ldots v-1 \right\}$.  Then 
$$
(P-N) = \sum_{r=0}^{v-1} \left(\frac{r}{p} \right) e^{2\pi i r/v}.
$$
The right side is just the Gauss sum $G_v(\chi)$ for the quadratic character
$\chi = \left(\frac{r}{v} \right)$.  Gauss famously showed that 
$$
G_v(\chi) = \left\{
\begin{array}{cc}
\sqrt{v} & v \equiv 1 \pmod 4\\
i\sqrt{v} & v \equiv 3 \pmod 4
\end{array}
\right.  
$$
See, for example, Theorem 1.2.4 of \cite{gauss}.  Therefore
$$
G_v(\chi) \overline{G_v(\chi)} = v,
$$
and since $v$ is prime the elements $e^{2 \pi i r/v}$ are linearly
independent over $\Qnum$, so
$$
(P-N)(P-N)^{-1} = v + \lambda G
$$
for some $\lambda$.  From Lemma~\ref{lem:p}, we have
$$s = \sqrt{\lambda(v-1)+k}= \sqrt{(\lambda+1)(v-1)} = 0,$$
so $\lambda = -1$.
\end{proof}

There are no signed difference sets with $\lambda < -1$;
using Lemma~\ref{lem:p},
\begin{eqnarray*}
\sum_{\begin{array}{cc}0 \leq i,j<k\\i \neq j\end{array}}
s_is_j ((d_i-d_j) \bmod v) & = & \lvert P \rvert \cdot (\lvert P \rvert -1) +
\lvert N \rvert \cdot (\lvert N \rvert -1) - 2 \lvert P \rvert  \cdot \lvert N \rvert  \\
&=& s^2-k = \lambda (v-1).
\end{eqnarray*}
The only solutions to this with $\lambda<0$ are $s=1,k=v$, as in
Proposition~\ref{thm:v} when $D$ is a Paley difference set,
and $s=0, k = v-1$, as in Proposition~\ref{thm:v-1}.

\begin{proposition}\label{thm:paley}
For $v=4n-1$ a prime power there exists a $(4n-1,2n,n-2)$ signed
difference set.
\end{proposition}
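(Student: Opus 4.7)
The plan is to take the Paley difference set and adjoin the identity with a negative sign. More precisely, let $v=4n-1$ be a prime power, identify $G$ with $(\mathbb{F}_v,+)$, let $Q \subset G$ denote the set of nonzero squares, and set $P = Q$, $N = \{0\}$. I claim $D = P - N$ is the desired $(4n-1,2n,n-2)$ signed difference set.

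The first step is a sanity check using Lemma~\ref{lem:p}: for $(v,k,\lambda) = (4n-1,2n,n-2)$ we have $s = \sqrt{(n-2)(4n-2)+2n} = 2(n-1)$, which gives $|P|=2n-1$ and $|N|=1$, exactly the sizes of $Q$ and $\{0\}$. This makes the proposed candidate the only natural one.

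The main step is to expand $D D^{-1}$ in $\Znum[G]$. Writing $e$ for the identity of $G$,
\[
D D^{-1} = (Q-e)(Q^{-1}-e) = Q Q^{-1} \;-\; Q \;-\; Q^{-1} \;+\; e.
\]
Two facts drive the computation. First, $Q$ is the classical Paley $(4n-1,2n-1,n-1)$ difference set in $G$ (here using that $v$ is a prime power, not merely a prime, which is why Proposition~\ref{thm:v-1} alone does not suffice), so
\[
Q Q^{-1} = n + (n-1)G.
\]
Second, because $v \equiv 3 \pmod 4$, $-1$ is a nonsquare in $\mathbb{F}_v$, so $-Q$ is exactly the set of nonsquares. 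Therefore $Q$ and $Q^{-1} = -Q$ partition $G \setminus \{e\}$, giving
\[
Q + Q^{-1} = G - e.
\]
Substituting yields $D D^{-1} = n + (n-1)G - (G-e) + e = (n+2) + (n-2)G$, which matches the required identity $DD^{-1} = (k-\lambda) + \lambda G$ with $k=2n$, $\lambda=n-2$.

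I do not anticipate a real obstacle here: the only things to be careful about are the direction of the congruence $v \equiv 3 \pmod 4$ (which is what makes $Q^{-1} \ne Q$ and puts $Q \cup Q^{-1}$ equal to $G \setminus \{e\}$), and the bookkeeping between the constant $n$ in the group ring (meaning $n\cdot e$) and the coefficient of $e$ inside the $(n-2)G$ term. With these in place the identity falls out in one line.
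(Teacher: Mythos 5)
Your proof is correct and takes essentially the same route as the paper: the Paley difference set as $P$ with $N=\{0\}$, using that $-1$ is a nonsquare modulo $v\equiv 3\pmod 4$. The only difference is that you write out the group-ring expansion $(Q-e)(Q^{-1}-e)$ explicitly, while the paper leaves that computation implicit.
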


\begin{proof}
For such $v$ a $(4n-1,2n-1,n-1)$ Paley
difference set $D$ exists, where the elements of the difference set are
the squares modulo $v$.  Since $-1$ is a nonsquare modulo $v$, for every $i$ 
exactly one of $i$ or $-i$ is a square modulo $v$, so 
$P=D$ and $N=\{0\}$ gives a
$(4n-1,2n,n-2)$ signed difference set.
\end{proof}

\section{Residue Signed Difference Sets}\label{sec:res}

Propositions~\ref{thm:v-1} and \ref{thm:paley} give signed difference sets
arising from quadratic residues.  Difference sets arise from $e$th
power residues for $e=2,4$ and $8$ \cite{lehmer}.  Here we give a
signed difference set formed using fourth power residues.

Let $H_4$ denote the fourth power residues modulo $v$.
Theorem 5.3.4(b) of \cite{gauss} shows that for $v$ a prime with 
$v=9+4b^2$ for an odd integer $b$, the fourth powers mod $v$ together
with $0$ form a difference set.
This proof will be directly analogous to the proof of that theorem.

Define
$$S(4) = \sum_{r \in H_4} e^{2\pi i r/v}.$$
From Lemma 5.1.1 of \cite{gauss} we have
\begin{equation}\label{eq:s4}
  S(4) = \frac{g(4)-1}{4},
\end{equation}
where the $g(k)$ is the Gauss sum
$$g(k) = \sum_{m=0}^{v-1} e^{2 \pi i m^k/v}.$$

\begin{corollary}\label{cor:g}
Let $v$ be a prime $\equiv 13 \pmod {16}$ of the form 
$25+4y^2$, for an odd integer $y$.  Take
$P$ to be the fourth powers modulo $v$,
$N$ to be $\{0\}$, 
and $D=P-N$.
Then $D$ is a signed $(v,(v+3)/4,(v-13)/16)$ difference set if and only if
  \begin{equation}\label{eqn:g4}
  \lvert g(4)-5 \rvert ^2 = 3v+25.    
  \end{equation}
\end{corollary}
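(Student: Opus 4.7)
The plan is to follow the template of Proposition~\ref{thm:v-1}, replacing quadratic residues by quartic ones. Write $\zeta = e^{2\pi i/v}$. The key algebraic fact is that for prime $v$ the rational group ring decomposes as $\Qnum[G] \cong \Qnum \oplus \Qnum(\zeta)$ via the trivial character $\chi_0$ and the character $\chi_1: r \mapsto \zeta^r$. Consequently $DD^{-1} = n + \lambda G$ in $\Znum[G]$ is equivalent to this identity holding in each of the two summands after extending scalars to $\Qnum$.

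First I would check the parameters. With $|H_4| = (v-1)/4$ and $|N| = 1$ one has $k = (v+3)/4$ and $s = |P|-|N| = (v-5)/4$; the hypothesis $v \equiv 13 \pmod{16}$ makes $\lambda = (v-13)/16$ an integer, and routine arithmetic verifies $s^2 = \lambda(v-1) + k$ as in Lemma~\ref{lem:p}, together with $n := k-\lambda = (3v+25)/16$ and $s^2 = n + \lambda v$.

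The substantive step is computing the nontrivial-character image of $DD^{-1}$. Using~(\ref{eq:s4}),
$$
\chi_1(D) \;=\; \chi_1(H_4) - \chi_1(\{0\}) \;=\; S(4) - 1 \;=\; \frac{g(4)-5}{4},
$$
and since $\chi_1(D^{-1}) = \overline{\chi_1(D)}$ one obtains $\chi_1(DD^{-1}) = |g(4)-5|^2/16$. The forward direction is then immediate: applying $\chi_1$ to $DD^{-1} = n + \lambda G$ yields $|g(4)-5|^2/16 = n$, which is~(\ref{eqn:g4}). For the reverse direction, assuming~(\ref{eqn:g4}), the image of $DD^{-1}$ under $\chi_1$ equals the rational $n$, while its image under $\chi_0$ equals $s^2 = n + \lambda v$ by the parameter check; these match the images of $n + \lambda G$ in the two summands, so $DD^{-1} = n + \lambda G$ in $\Qnum[G]$, and therefore in $\Znum[G]$.

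The main obstacle is the reverse direction. A priori, the values $|\chi_a(D)|^2$ for $a$ in the four distinct cosets of $H_4$ in $(\Znum/v)^*$ are sums involving different quartic Gauss periods, and~(\ref{eqn:g4}) only appears to control the coset containing $a = 1$. What saves the argument is that $\chi_1(DD^{-1})$ is a single element of $\Qnum(\zeta)$, and once it is forced to equal a rational number its Galois conjugates $|\chi_a(D)|^2$ for all other nontrivial $a$ coincide with $n$ automatically. Framing the proof through the Chinese Remainder decomposition $\Qnum[G] \cong \Qnum \oplus \Qnum(\zeta)$ is precisely what makes this Galois-invariance step transparent, so the equivalence in the corollary reduces to the single equation~(\ref{eqn:g4}).
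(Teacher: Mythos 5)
Your proof is correct and follows essentially the same route as the paper: both reduce the statement to the evaluation of $D$ at the character $r \mapsto e^{2\pi i r/v}$, use $S(4)-1 = (g(4)-5)/4$ for the forward direction, and use the linear independence of the $v$-th roots of unity over $\Qnum$ (equivalently, your decomposition $\Qnum[G] \cong \Qnum \oplus \Qnum(\zeta)$) for the converse. Your Chinese-remainder packaging, with the explicit check that the trivial-character component gives $s^2 = n + \lambda v$, is a slightly cleaner way of organizing the same argument.
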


\begin{proof}
Take $G$ to be $\{e^{2 \pi i r/v}\}$ as in Proposition~\ref{thm:v-1}.
and assume that $D$ is a signed difference set.  Then
  \begin{eqnarray*}
D D^{-1}  & = & (S(4)-1)\overline{(S(4)-1)}   \\
& = & \sum_{r,s\in H_4} e^{2\pi i (r-s)/v} - \sum_{r \in H_4} \left(
e^{2\pi i r/v} + e^{-2\pi i r/v}\right) + 1 \\
& = & \frac{v-1}{4} +\lambda \sum_{m \not\equiv 0 \bmod v} e^{2 \pi i
  m/p}  \\
& = & \frac{v-1}{4} -\lambda + 1 = \frac{3v+25}{16}
  \end{eqnarray*}

The corollary then  follows from (\ref{eq:s4}).

Conversely, suppose that (\ref{eqn:g4}) holds.  From the same lemma we
have
$\lvert S(4)-1 \rvert ^2 = (3v+25)/16$.  Rearranging the terms in 
$(S(4)-1)\overline{(S(4)-1)}$, we get
$$
\sum_{m=1}^{v-1} a_m e^{2\pi i m/v} - \sum_{r \in H_4} \left(e^{2\pi i   r/v}+ e^{-2\pi i r/v}\right) = \lambda \sum_{m=1}^{v-1} e^{2\pi  i m/v},
$$
where $a_m$ is the number of ways of representing  $m$ as a difference
of two elements of $H_4$.  
Since the elements $e^{2 \pi i m/v}$ are linearly independent over
$\Qnum$, each term must be equal to $\lambda$, so $D$ is a difference
set.
\end{proof}

\begin{theorem}
  For $v$ and $D$ as in Corollary~\ref{cor:g}, $D$ is a signed
  difference set.
\end{theorem}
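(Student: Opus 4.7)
Since Corollary~\ref{cor:g} has already reduced the theorem to verifying the Gauss sum identity $|g(4)-5|^{2} = 3v+25$, the plan is to expand $g(4)$ in standard Dirichlet character Gauss sums and read off the identity from the classical quartic Jacobi-sum theory, mimicking the proof of Theorem~5.3.4(b) of \cite{gauss}.

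First, I would decompose $g(4)$. Counting the multiplicities with which each nonzero residue appears as $m^{4} \bmod v$ yields
$$g(4) = G(\chi_{2}) + G(\chi) + G(\bar\chi),$$
where $\chi$ is a character of order $4$ on $(\Znum/v\Znum)^{*}$, $\chi_{2} = \chi^{2}$ is the quadratic residue character, and $G(\psi) = \sum_{n=1}^{v-1}\psi(n)\,e^{2\pi i n/v}$. By Gauss's quadratic sum formula, $G(\chi_{2}) = \sqrt{v}$ since $v \equiv 1 \pmod 4$. The hypothesis $v \equiv 13 \pmod{16}$ implies $v \equiv 5 \pmod 8$, so $\chi(-1) = i^{(v-1)/2} = -1$, and the identity $\overline{G(\chi)} = \chi(-1)\,G(\bar\chi)$ gives $G(\bar\chi) = -\overline{G(\chi)}$. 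Hence $g(4) = \sqrt{v} + 2i\,\operatorname{Im} G(\chi)$, so that
$$|g(4)-5|^{2} = (\sqrt{v}-5)^{2} + 4\,(\operatorname{Im} G(\chi))^{2} = v - 10\sqrt{v} + 25 + 4\,(\operatorname{Im} G(\chi))^{2}.$$

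Next, I would evaluate $(\operatorname{Im} G(\chi))^{2}$ via the Jacobi sum identity $G(\chi)^{2} = J(\chi,\chi)\,G(\chi_{2}) = \sqrt{v}\,J(\chi,\chi)$, together with $|J(\chi,\chi)|^{2} = v$. Writing $J(\chi,\chi) = a + 2bi$ with $a, b \in \Znum$ and $v = a^{2} + 4b^{2}$, the hypothesis $v = 25 + 4y^{2}$ forces $|a| = 5$, and the classical normalization $a \equiv -1 \pmod 4$ (the analog of the sign determination giving $a = -3$ in Theorem~5.3.4(b) of \cite{gauss}) pins down $a = -5$. Writing $G(\chi) = x + iy_{0}$ and equating the real and imaginary parts of $G(\chi)^{2} = \sqrt{v}(-5 + 2bi)$ with $x^{2} + y_{0}^{2} = v$, one solves to find $y_{0}^{2} = (v + 5\sqrt{v})/2$. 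Substituting back gives
$$|g(4)-5|^{2} = v - 10\sqrt{v} + 25 + 2(v + 5\sqrt{v}) = 3v + 25,$$
which is the required identity.

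The principal obstacle is the sign of $a$ in the Jacobi-sum decomposition: the canonical value must be $a = -5$, since taking $a = +5$ would instead yield $3v + 25 - 20\sqrt{v}$ and spoil the identity. Verifying this sign (either by invoking the standard normalization of the quartic Jacobi sum or by a short direct congruence argument mod $(1+i)^{3}$) is the delicate point; once it is settled, the remainder is the routine real/imaginary-part calculation above, essentially identical in structure to the computation in the proof of Theorem~5.3.4(b) of \cite{gauss}.
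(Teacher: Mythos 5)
Your proof is correct, and it reaches the same final computation as the paper, but by a genuinely different route at the key step. The paper's proof is essentially two lines: it cites Theorem 4.2.1 of \cite{gauss} for the evaluation of the quartic Gauss sum (the displayed formula there is garbled by a typo, but the intended statement is $g(4)=\sqrt{v}\pm\sqrt{-2(v+5\sqrt{v})}$, i.e.\ $g(4)+\overline{g(4)}=2\sqrt{v}$ and $g(4)\overline{g(4)}=3v+10\sqrt{v}$), and then expands $\lvert g(4)-5\rvert^2$ exactly as you do. You instead rederive that evaluation from scratch: decomposing $g(4)=G(\chi_2)+G(\chi)+G(\bar\chi)$, using $v\equiv 5\pmod 8$ to get $\chi(-1)=-1$ and hence $g(4)=\sqrt{v}+2i\,\mathrm{Im}\,G(\chi)$, and then pinning down $(\mathrm{Im}\,G(\chi))^2=(v+5\sqrt{v})/2$ via $G(\chi)^2=\sqrt{v}\,J(\chi,\chi)$ and the normalization $\mathrm{Re}\,J(\chi,\chi)\equiv -1\pmod 4$, which together with $v=25+4y^2$ forces $\mathrm{Re}\,J(\chi,\chi)=-5$. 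This is exactly the content of the theorem the paper cites, and your identification of the sign of $a$ as the one delicate point is on target (one can check it against $v=29$, where the $(29,8,1)$ set does exist only with $a=-5$). What your approach buys is self-containedness and, incidentally, a reconstruction of the correct statement of the cited formula; what it costs is having to verify the Jacobi-sum normalization carefully. One small caution: under the normalization $a\equiv -1\pmod 4$ that you invoke, the case $v=9+4b^2$ of Theorem 5.3.4(b) gives $a=+3$ rather than $-3$ (e.g.\ $J(\chi,\chi)=3-2i$ for $v=13$); the value $-3$ belongs to the opposite convention $a\equiv 1\pmod 4$ used in some references, so you should make sure you use one convention consistently --- your main computation with $a=-5$ is under the $a\equiv -1\pmod 4$ convention and is the correct one.
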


\begin{proof}
By Theorem 4.2.1 of \cite{gauss},
$$
g(4) = \sqrt{v} \pm \left( -2(v+25\sqrt{v}) \right).
$$
From this we find 
\begin{eqnarray*}
\lvert g(4)-5 \rvert ^2 & = & g(4) \overline{g(4)} - 5\left( g(4) + \overline{g(4)}\right) + 25 \\  
& = & v + (2v+10\sqrt{v}) - 10 \sqrt{v} + 25 \\
& = & 3v+25,
\end{eqnarray*}
and by the Corollary, $D$ is a signed difference set.
\end{proof}

The first such signed difference sets are $(29,8,1)$, $(61,16,3)$ and $(349,88,21)$.
A search was done for other signed difference sets of $e$th-power residues with $N=\{0\}$
that form an SDS.  There were none with $v<10000$ for $m=3,5,6,8,12$.

\section{Prime Pair Signed Difference Sets}\label{sec:pp}

It is well known that if $q$ and $q+2$ are odd prime powers, then letting $n=(q+1)^2/4$, a 
$(4n-1,2n-1,n-1)$ difference set exists (see \cite{bjl}).  A similar
family of signed difference set exists:

\begin{theorem}\label{thm:pp}
Let $n = m^2$, where $q=2m-3$ and $r=q+6=2m+3$ are prime powers.  Let $\chi$
denote the quadratic character in any finite field: $\chi(x)=1$ if $x$
is a square, $\chi(x)= -1$ if $x$ is not a square, and $\chi(0)=0$.
Then 
$N=\{(0,0)\}$ and
$$
P = \{ (x,y): \chi(x) \chi(y) = 1 \} \cup \{ (0,y): y \neq 0 \}
$$
gives a $(4n-9,2n-1,n-1)$ signed difference set in
$G = (GF(q),+) \oplus (GF(r),+)$.
\end{theorem}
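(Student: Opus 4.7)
The plan is to verify $DD^{-1} = n + (n-1)G$ by computing $|\psi(D)|^2$ for every nontrivial additive character $\psi$ of $G$. First I would check parameters: since $\chi_q$ is balanced on $GF(q)^*$, a direct count gives $|P| = (q+1)(r-1)/2 = 2(m^2-1) = 2n-2$ and $|N| = 1$, so $k = 2n-1$ and $s = |P|-|N| = 2n-3$, matching $s^2 = (n-1)(v-1)+k$ in Lemma~\ref{lem:p}. Since $G$ is abelian, the whole claim reduces to showing $|\psi(D)|^2 = n$ for every nontrivial $\psi$.

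Parametrize the nontrivial characters as $\psi_{\alpha,\beta}(x,y) = \psi_q^\alpha(x)\psi_r^\beta(y)$ with $(\alpha,\beta)\neq(0,0)$. Using the identity $\mathbf{1}\{\chi_q(x)\chi_r(y)=1\} = \tfrac{1}{2}(1+\chi_q(x)\chi_r(y))$ for $x,y \neq 0$, the sum $\psi_{\alpha,\beta}(D)$ decomposes into standard ingredients: the punctured sums $S_F(\alpha) = \sum_{x\neq 0}\psi_F^\alpha(x)$ (equal to $-1$ or $|F|-1$), and the quadratic Gauss sums $G_F(\alpha) = \chi_F(\alpha)\,G_F$, which vanish when $\alpha = 0$.

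Splitting into three cases: when exactly one of $\alpha,\beta$ is zero, the Gauss-sum contribution drops out and arithmetic gives $\psi_{\alpha,\beta}(D) = -(q+3)/2 = -m$ (for $\alpha=0$, $\beta\neq 0$) or $(r-3)/2 = m$ (for $\alpha\neq 0$, $\beta=0$), so $|\psi_{\alpha,\beta}(D)|^2 = n$ immediately. In the main case $\alpha,\beta\neq 0$, the four pieces collapse to
\[
\psi_{\alpha,\beta}(D) \;=\; \tfrac{1}{2}\bigl(G_q(\alpha)G_r(\beta) - 3\bigr),
\]
and using $qr = 4n-9$ yields $|\psi_{\alpha,\beta}(D)|^2 = n - \tfrac{3}{2}\,\mathrm{Re}\bigl(G_q(\alpha)G_r(\beta)\bigr)$.

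The hard part is verifying $\mathrm{Re}\bigl(G_q(\alpha)G_r(\beta)\bigr) = 0$, which is the real content of the theorem. The key observation is that because $r-q = 6$, exactly one of $q,r$ is $\equiv 1 \pmod 4$ and the other $\equiv 3 \pmod 4$. The classical Gauss sign determination, extended to prime powers via the Hasse--Davenport relation, says that the quadratic Gauss sum over a finite field $F$ is real when $|F|\equiv 1 \pmod 4$ and purely imaginary when $|F|\equiv 3 \pmod 4$. Hence $G_qG_r$ is purely imaginary, and the real scalar $\chi_q(\alpha)\chi_r(\beta)\in\{\pm 1\}$ preserves this, so $\mathrm{Re}\bigl(G_q(\alpha)G_r(\beta)\bigr) = 0$, completing the plan.
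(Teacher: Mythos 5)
Your proof is correct, but it follows a genuinely different route from the paper. The paper argues combinatorially: the elements of $P$ with $\chi(x)\chi(y)=1$ form (the translate class of) a multiplicative group $M$ of index $2$ in the units of $GF(q)\oplus GF(r)$ with $P=MP$, so the difference multiplicities are constant on the four classes $\chi(x)=\chi(y)\neq 0$, $\chi(x)=-\chi(y)\neq 0$, $x=0$, $y=0$; it computes $\lambda_3$ and $\lambda_4$ by direct counting, gets $\lambda_1=\lambda_2$ from $(-1,-1)\notin M$, and pins down their common value by a total count of differences. You instead verify $\lvert\psi(D)\rvert^2=n$ for every nontrivial additive character, which is the Fourier-analytic counterpart and is closer in spirit to the paper's Section~\ref{sec:res}. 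Your case analysis and arithmetic all check out ($-m$ and $m$ in the two degenerate cases, and $\tfrac14(qr+9)=n$ in the main case). It is worth noting that both proofs turn on the same arithmetic fact --- exactly one of $q,r$ is $\equiv 1\pmod 4$ --- which for the paper gives $(-1,-1)\notin M$ and for you makes $G_qG_r$ purely imaginary; these are really the same symmetry. One small simplification available to you: you do not need the full Gauss sign determination or the Hasse--Davenport relation, only the elementary conjugation identity $\overline{G_F}=\chi_F(-1)\,G_F$ together with $\chi_F(-1)=(-1)^{(\lvert F\rvert-1)/2}$, which already shows $G_F$ is real or purely imaginary according to $\lvert F\rvert \bmod 4$. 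The trade-off is the usual one: the paper's counting argument is elementary and mirrors the classical twin-prime-power construction, while your character computation is more mechanical, explains the value $n-1$ via $\lvert G_qG_r\rvert^2=qr=4n-9$, and generalizes more readily to other character-theoretic constructions.
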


\begin{proof}
We have $v = q(q+6) = 4m^2-9= 4n-9$ and it is easy to check that $k = 2m^2-1=2n-1$,

The elements $(x,y)\in P$ with $y \neq 0$ form a multiplicative group
$M$ in the ring $GF(q) \oplus GF(q+6)$, and $P = MP$, so 
differences $(x,y)$ and $(mx,m^* y)$ with $(m,m^*)\in M$ occur equally
often in $\Delta P$.  So there are constants $\lambda_1$, $\lambda_2$,
$\lambda_3$, and $\lambda_4$ such that the number of times each nonzero difference 
$((x,y)$ occurs is:
$$
\begin{array}{rl}
  \lambda_1: &   \chi(x)  =  \chi(y) \neq 0, \\
  \lambda_2: &   \chi(x)  =  -\chi(y) \neq 0, \\
  \lambda_3: &   x  =  0, \\
  \lambda_4: &   y  =  0. \\
\end{array}
$$

Since one of $q$ and $q+6$ is $1 \pmod 4$ and the other is $3 \pmod
4$, $(-1,-1) \not \in M$, so $(x,y)\in P$ if and only if $(-x,-y) \not
\in P$, so $\lambda_1 = \lambda_2$..

To compute $\lambda_3$, every $(0,d)$ occurs as a difference
$(0,y+d)-(0,y)$ or $(x,y)-(x,y^*)$, where $x \neq 0$
and $\chi(y)=\chi(y^*)$.  The number of ways this can happen is
$$
r-2 +  \frac{(r-3)(q-1)}{4} = n+1.
$$
$(0,d)$ also occurs twice as a difference of elements of $P$ and $N$:
$(0,d)-(0,0)$ and $(0,0)-(0,-d)$, so $\lambda_3 = n-1$.

To compute $\lambda_4$, note that every $(d,0)$ occurs as a difference:
$(d,y)-(0,y)$, $(0,y)-(-d,y)$, or $(x,y)-(x,y^*)$, where $x \neq 0$
and $\chi(y)=\chi(y^*)$.  The number of ways this can happen is
$$
2 \  \frac{r-1}{2} + \frac{(r-1)(q-3)}{4} = n-1.
$$
Since no $(d,0) \in P$, there is no contribution from a difference
with $(0,0)$, so $\lambda_4 = n-1$.

Altogether there are $(2n-2)(2n-3)$ differences of elements of $P$.
Subtracting the $2(2n-1)$ differences of an element of $P$ and
$(0,0)$, and the $(n-1)(4n-2)$ from the $\lambda_3$ and $\lambda_4$ terms,
we have a total of $4 (m+1)^2 (m-1)(m-2)$ among the $4(m+1)(m-2)$
$\lambda_1$ and $\lambda_2$ terms, giving $\lambda_1 = \lambda_2 = (m+1)(m-1) = n-1$.
\end{proof}

The first signed difference sets of this type are $(27,17,8)$,
$(55,31,15)$, and $(91,49,24)$.

\section{Computer Searches}

Computer searches were done to find signed difference sets.  Just
exhausting over potential sets quickly become too expensive, so the
following theorems were used to speed the search.  As with difference
sets and circulant weighing matrices, multiplier theorems can greatly
cut down the search space.

The following theorem is in McFarland's thesis (see Result 3.2 in \cite{tan}):

\begin{theorem}\label{thm:mult}
  Let $G$ be a finite abelian group of order $v$ and exponent $e$. 
For $D \in \Znum[G]$ such that
$$
D D^{(-1)} \equiv n \bmod G
$$
where $\gcd(v,n)=1$, let
$$
n = p_1^{e_1} \cdots p_s^{e_s} 
$$
where the $p_i$'s are distinct primes.  Let $t$ be an integer with
$\gcd(v,t)=1$, and suppose there are integers $f_1,\ldots,f_s$ such
that
$$
t \equiv p_1^{f_1}  \equiv  \cdots  \equiv  p_s^{f_s}  \bmod e.
$$
Then $t$ is a multiplier of $D$, and some translate of $D$ is fixed by $t$.
\end{theorem}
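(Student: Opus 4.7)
The plan is to mimic the classical multiplier theorem argument of Hall, Menon, and Turyn in McFarland's more general group-ring setting, combining a Freshman's-dream / Frobenius argument for each prime factor of $n$ with a character-theoretic integrality finish.

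First, I would set up the character framework. Applying a nontrivial character $\chi$ of $G$ to $DD^{(-1)}\equiv n\bmod G$ gives $|\chi(D)|^2=n$ in $\Znum[\zeta_e]$. Since $\gcd(t,v)=1$ and $e\mid v$, the map $g\mapsto g^t$ is an automorphism of $G$, and on character values it corresponds to the Galois element $\sigma_t:\zeta_e\mapsto\zeta_e^t$; explicitly $\chi(D^{(t)})=\sigma_t(\chi(D))$. Proving $t$ is a multiplier (i.e., $D^{(t)}=hD$ for some $h\in G$) therefore reduces to producing a \emph{single} $h\in G$ satisfying $\sigma_t(\chi(D))=\chi(h)\chi(D)$ for every nontrivial $\chi$.

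Second, I would run the Freshman's-dream step for each prime $p=p_i$ dividing $n$. The identity $\bigl(\sum a_g g\bigr)^p\equiv\sum a_g^p g^p\pmod p$ combined with Fermat gives $D^p\equiv D^{(p)}\pmod p$ in $\Znum[G]$, and iterating yields $D^{p_i^{f_i}}\equiv D^{(p_i^{f_i})}\pmod{p_i}$. Writing $DD^{(-1)}=n+\lambda_0 G$ and multiplying by $D^{(-1)}$:
$$
D^{(p_i^{f_i})}D^{(-1)} \;\equiv\; D^{p_i^{f_i}-1}(n+\lambda_0 G) \;\equiv\; \lambda_0 k^{p_i^{f_i}-1}G \pmod{p_i},
$$
where $k$ is the coefficient sum of $D$. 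The hypothesis $t\equiv p_i^{f_i}\pmod e$ forces $g^t=g^{p_i^{f_i}}$ for every $g\in G$ (each element has order dividing $e$), so $D^{(t)}=D^{(p_i^{f_i})}$ in $\Znum[G]$ and hence $E:=D^{(t)}D^{(-1)}\equiv c_iG\pmod{p_i}$ for an integer $c_i$. Combining over all $i$ via the Chinese remainder theorem yields $E\equiv cG\pmod n$ for a single integer $c$.

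Third, I would carry out the integrality finish. Writing $E=cG+nF$ with $F\in\Znum[G]$, the identity $|\chi(E)|=n$ for nontrivial $\chi$ (from $|\chi(D^{(t)})|=|\chi(D)|=\sqrt n$) forces $|\chi(F)|=1$, so by Kronecker's theorem each $\chi(F)$ is a root of unity. A short computation using $\chi_0(E)=k^2$ gives a congruence for the coefficient sum of $F$ modulo $v$; Parseval plus integrality of coefficients then forces $F=h$ to be a single group element of $G$. Reading off $E=nh+cG$ gives $D^{(t)}=hD$, and the translate-fixing clause follows by the standard auxiliary argument solving $g_0^{1-t}=h$. The principal obstacle I expect is precisely this last step: ruling out alternative solutions for $F$ (such as $F=G-h$ or $F=-h$) given only that the nontrivial Fourier coefficients have modulus one. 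Pinning down $F=h$ requires the specific value of $\chi_0(F)$ modulo $v$ together with a lattice-point / $\ell^2$-norm argument in the style of McFarland's original write-up.
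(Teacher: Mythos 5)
The paper does not prove this theorem at all: it is quoted verbatim as a known result from McFarland's thesis (Result 3.2 of the survey cited as \cite{tan}), so there is no in-paper argument to compare yours against. Judged on its own, your outline follows the standard multiplier-theorem template and correctly identifies the two pillars (Frobenius congruences for each $p_i$, then a character-theoretic integrality finish), but it contains one genuine gap. The congruence $D^{p}\equiv D^{(p)}\pmod p$ and its iterates only ever hold modulo $p_i$, never modulo $p_i^{e_i}$; consequently your Chinese-remainder step yields $E\equiv cG$ modulo the radical $p_1\cdots p_s$ of $n$, not modulo $n$ itself. Writing $E=cG+nF$ is therefore unjustified unless $n$ is squarefree, and without it the conclusion $\lvert\chi(F)\rvert=1$ and the Kronecker step collapse. (In the classical Second Multiplier Theorem this issue is absorbed by the hypotheses that the coefficients of $D$ are nonnegative and that the relevant divisor of $n$ exceeds $\lambda$, which allow a counting finish; neither is available here, since $D$ is an arbitrary element of $\Znum[G]$, which is exactly the generality the present paper needs for signed difference sets.)

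The standard repair, and the one McFarland-type proofs actually use, is ideal-theoretic rather than congruential: from $\chi(D)\overline{\chi(D)}=n$ one sees that the ideal $(\chi(D))$ of $\Znum[\zeta_e]$ is supported on primes above the $p_i$; since $\gcd(n,e)=1$, the Frobenius $\sigma_{p_i}$ fixes every prime ideal above $p_i$, and the hypothesis $t\equiv p_i^{f_i}\pmod e$ makes $\sigma_t$ act on those primes as a power of that Frobenius. Hence $\sigma_t$ fixes the ideal $(\chi(D))$, so $\chi(D^{(t)})\overline{\chi(D)}=nu_\chi$ with $u_\chi$ a unit all of whose conjugates have absolute value $1$, i.e., a root of unity by Kronecker; the Fourier-inversion finish then proceeds much as you describe. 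Your last step is also slightly overstated: Parseval together with the $\chi_0$ computation forces $F=h+mG$ rather than $F=h$, but this still gives $E=nh+c'G$ and hence $D^{(t)}=hD$, so that part of your plan survives.
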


Another useful tool was used extensively in \cite{agz}.
As with difference sets and circulant weighing matrices, we may use
subgroups of $G$ to restrict possible signed difference sets.
Let $v=dw$, with $d,w>1$, $H$ be a subgroup of $G$ of order $w$, and
$\overline{D}$ be the natural map from $\Znum[G]$ to $\Znum[H]$.

\begin{lemma}\label{lem:subgp}
For a $(v,k,\lambda)$ signed difference set $D$ as above,
\begin{align}
& \overline{D}  =  \sum_{h \in H} b_h h,\\
& \sum_{h \in H} b_h  = \lvert P \rvert -\lvert N \rvert  =  s,\label{eq:s}\\ 
& \sum_{h \in H} b_h^2   =   k+ \lambda(d-1).\label{eq:k}
\end{align}
\end{lemma}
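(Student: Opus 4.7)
The three claims all arise by applying the natural group-ring homomorphism $\overline{\cdot}: \Znum[G] \to \Znum[H]$ to the defining identity $DD^{(-1)} = n + \lambda G$ and then extracting either the total coefficient sum or the coefficient of the identity element. The expression $\overline{D} = \sum_{h \in H} b_h h$ is simply the definition of the $b_h$ as coordinates of the image of $D$ in the basis $H$ of $\Znum[H]$, and needs no argument.

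For \eqref{eq:s} I would compose $\overline{\cdot}$ with the augmentation $\varepsilon: \Znum[H] \to \Znum$ that sends every group element to $1$. Since the natural map preserves total coefficient sums,
\[
\sum_{h \in H} b_h \;=\; \varepsilon(\overline{D}) \;=\; \varepsilon(D) \;=\; \sum_{g \in G} s_g \;=\; \lvert P \rvert - \lvert N \rvert \;=\; s,
\]
the last equality being the definition of $s$ from Lemma~\ref{lem:p}.

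The substantive identity is \eqref{eq:k}. Here I would apply $\overline{\cdot}$ to both sides of $DD^{(-1)} = n + \lambda G$. Using the ring-homomorphism property, the left side becomes $\overline{D}\,\overline{D}^{(-1)} = \bigl(\sum_h b_h h\bigr)\bigl(\sum_h b_h h^{-1}\bigr)$, whose coefficient at the identity of $H$ equals $\sum_{h \in H} b_h^2$, since only the pairings $h \cdot h^{-1}$ contribute. On the right side the key calculation is that the natural map sends $G$ to $d \cdot \sum_{h \in H} h$, because each element of $H$ receives $d = v/w$ preimages from the projection; hence the coefficient of the identity on the right is $n + \lambda d$. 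Equating the two and substituting $n = k - \lambda$ yields $\sum_h b_h^2 = n + \lambda d = k + \lambda(d-1)$.

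The only point requiring care, and the main (if minor) obstacle, is correct bookkeeping of the fiber size in the identity $\overline{G} = d\sum_h h$; this is what forces the factor $d$ rather than $w$ in the final formula. Once this is fixed, every step is a routine consequence of $\overline{\cdot}$ being an augmentation-preserving ring homomorphism.
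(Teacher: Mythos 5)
Your proof is correct and is the standard argument: the paper itself gives no proof of this lemma (it defers to Lemma VI.5.4 of \cite{bjl} and Lemma 2.5 of \cite{agz}), and the argument there is exactly yours --- apply the projection to $DD^{(-1)} = n + \lambda G$, read off the augmentation for (\ref{eq:s}) and the identity coefficient for (\ref{eq:k}). Your bookkeeping $\overline{G} = d\sum_{h\in H} h$ (fibers of size $d=v/w$) is the correct reading of the paper's slightly loose phrase ``natural map from $\Znum[G]$ to $\Znum[H]$,'' and it is precisely what produces $k+\lambda(d-1)$ rather than $k+\lambda(w-1)$.
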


The $b_i$'s are called the {\em intersection numbers}; see Lemma
VI.5.4 of \cite{bjl} and Lemma 2.5 of \cite{agz}.  As in Algorithm 1
of \cite{agz}, we form an array where the columns are orbits of the
multiplier group in $H$, the rows are orbits of the multiplier group
in $G/H$, and the contents of each array element are the orbits in $G$
congruent to the corresponding orbits in the subgroups.  When
Theorem~\ref{thm:mult} implies a nontrivial multiplier group, this
dramatically cuts down the search space.

\begin{table}
\begin{center}
\begin{tabular}{|c|c|c|c||c|c|} \hline
$v$ & $k$ & $\lambda$ & $n$ & $\lvert P \rvert $ & $\lvert N \rvert $ \\ \hline
19 & 13 & 2 &  11 &  10 & 3\\
19 & 13 & 6  & 7 &  12 & 1 \\
20 & 11 & 2  & 9 &  9 & 2 \\
31 & 24 & 4  & 20 &  18 & 6 \\
35 & 19 & 3  & 16 & 15 & 4 \\
35 & 21 & 10 & 11 & 20 & 1 \\
51 & 19 & 3 & 16 & 16 & 3 \\
53 & 40 & 27 & 13 & 39 & 1 \\
55 & 10 & 1 & 9 & 9 & 1 \\
67 & 49 & 12 & 37 & 39 & 10 \\
67 & 49 & 20 & 29 & 43 & 6 \\
71 & 51 & 1 & 50 & 21 & 20 \\
73 & 36 & 4 & 32 & 27 & 9 \\
78 & 53 & 28  & 25 & 50 & 3 \\
89 & 33 & 1 & 32 & 22 & 11 \\
91 & 76 & 60 & 16 & 75 & 1 \\
93 & 32 & 7 & 25 & 29 & 3 \\
93 & 73 & 48 & 25 & 70 & 3 \\
104 & 29 & 4 & 25 & 25 & 4 \\
111 & 66 & 17 & 49 & 55 & 11 \\
219 & 83 & 19 & 64 & 74 & 9 \\
219 & 172 & 108 & 64 & 163 & 9 \\
247 & 127 & 63  & 64 & 126 & 1 \\

\hline
\end{tabular}

\caption{Sporadic Cyclic Signed Difference Sets}
\label{tab:sporadic}
\end{center}
\end{table}

Table~\ref{tab:sporadic} lists cyclic signed difference sets not given by the
constructions in Sections~\ref{sec:ex}, \ref{sec:res} and \ref{sec:pp} that were found this way.  The
sets themselves are available at \cite{ljcr}.


\section{Noncyclic Signed Difference Sets}

There has been a large amount of work on noncyclic difference sets,
and a smaller number of papers on group developed weighing matrices,
the generalization of circulant weighing matrices to an abelian group.

Arasu and Hollon \cite{arasu2013group} gather known results about such
weighing matrices, and settle the existence question for many
parameters.
They give tables for weighing matrices $C(v,k)$ in cyclic groups with $v,k \leq 100$.
The existence question is settled for all but 98 out of 1022 such parameters.

A search for noncyclic signed difference sets with $\lambda \neq 0$
not constructed with Theorem~\ref{thm:pp} turned
up only one such signed difference set:
an $(18,13,4)$ set in $\Znum_2 \times \Znum_3 \times \Znum_3$.


\begin{theorem}
In the group $G=\Znum_2 \times \Znum_3 \times \Znum_3$, let $P$ be
$$\left\{(0,x,1), (0,x,2), (1,2,x) :  x \in \{0,1,2\} \right\} \cup \{(1,0,1), (1,1,0)\},$$
and let
$N=\left\{ (1,0,0), (1,1,1) \right\}$.
Then $D=P-N$ is a signed difference set.
\end{theorem}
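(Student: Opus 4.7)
Since $G=\Znum_2 \times \Znum_3 \times \Znum_3$ is abelian, the plan is to verify $DD^{(-1)}=n+\lambda G$ with $n=9$, $\lambda=4$ by character inversion: it suffices to check that $\chi(D)\overline{\chi(D)}=9$ for every nontrivial character $\chi$ of $G$, together with the principal character value $\chi_0(D)=\lvert P\rvert-\lvert N\rvert=11-2=9$, consistent with $s=\sqrt{4\cdot 17+13}=9$ from Lemma~\ref{lem:p}.

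Index characters by $(a,b,c)\in\Znum_2\times\Znum_3\times\Znum_3$ via $\chi_{a,b,c}(x,y,z)=(-1)^{ax}\zeta^{by+cz}$ with $\zeta=e^{2\pi i/3}$. I would split $P$ along the subgroup $\{0\}\times\Znum_3\times\Znum_3$ as $P=P_0\cup P_1$, where
$$P_0=\{(0,x,y):x\in\Znum_3,\ y\in\{1,2\}\},\qquad P_1=\{(1,2,x):x\in\Znum_3\}\cup\{(1,0,1),(1,1,0)\},$$
and compute $\chi(P_0)$, $\chi(P_1)$, $\chi(N)$ separately. The rectangular shape of $P_0$ makes $\chi(P_0)=\bigl(\sum_x\zeta^{bx}\bigr)(\zeta^c+\zeta^{2c})$ factor, vanishing whenever $b\ne 0$; this reduces the verification to four regimes according to whether $b$ and $c$ are zero, each doubled by $a\in\{0,1\}$.

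In the three regimes in which at least one of $b,c$ is zero, each of $\chi(P_0)$, $\chi(P_1)$, $\chi(N)$ simplifies immediately using $1+\zeta+\zeta^2=0$, and $\lvert\chi(D)\rvert^2=9$ follows by direct substitution. The only mildly delicate case is $b,c\ne 0$, where $\chi(P_0)=0$ and
$$\chi(D)=(-1)^a\bigl(\zeta^c+\zeta^b-1-\zeta^{b+c}\bigr).$$
Here I would handle the subcase $b+c\equiv 0\pmod 3$ (which collapses to $\pm 3$) and the subcase $b+c\not\equiv 0\pmod 3$ (which, after applying $1+\zeta+\zeta^2=0$, collapses to $\pm 3\zeta^{b+c}$) separately, each yielding $\lvert\chi(D)\rvert^2=9$.

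The main obstacle is purely bookkeeping: the sixteen nonprincipal characters produce nearly identical but slightly different expressions, and one must arrange the case split so the algebra is transparent rather than a wall of computations. Once $\lvert\chi(D)\rvert^2=9$ is established in every case, the Fourier inversion formula on $G$ yields $DD^{(-1)}=9+4G$, proving that $D$ is an $(18,13,4)$ signed difference set.
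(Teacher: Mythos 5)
Your proof is correct, but it takes a genuinely different route from the paper's. The paper works entirely on the ``time'' side: it verifies directly that the autocorrelation $\sum_{(x,y,z)} s_{(x,y,z)}\, s_{(x,y,z)+(a,b,c)}$ equals $4$ for every nonzero shift $(a,b,c)$, splitting into $a=1$ and $a=0$ and reducing everything to dot products of the columns of the $(1,y,z)$ slice with shifts of themselves or of each other. You instead work on the dual side, showing $\lvert\chi(D)\rvert^2=9$ for every nontrivial character and $\chi_0(D)=\lvert P\rvert-\lvert N\rvert=9$, then invoking Fourier inversion on the abelian group; this is a standard equivalent criterion, and your case analysis checks out (I verified all four regimes: $b=c=0$ gives $6+3(-1)^a$; $b=0,c\neq0$ gives $-3$; $b\neq0,c=0$ gives $3(-1)^a\zeta^{2b}$; and in the delicate $b,c\neq0$ case, $b+c\equiv0$ gives $-3(-1)^a$ while $b=c$ gives $3(-1)^a\zeta^{b}$). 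Your approach buys a cleaner reduction---the rectangle $P_0$ has vanishing transform whenever $b\neq0$, which eliminates most of the work---and the method generalizes to any abelian group; the paper's approach is more elementary, needs no character theory, and exhibits $\lambda=4$ directly rather than recovering it from $\lambda=(s^2-n)/v$. Two cosmetic slips worth fixing: an order-$18$ abelian group has seventeen nonprincipal characters, not sixteen; and in the subcase $b=c\neq0$ the expression collapses to $3(-1)^a\zeta^{b}$ rather than $\pm3\zeta^{b+c}$, though only the modulus matters for the conclusion.
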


\begin{figure}
\begin{center}
\begin{tikzpicture}
\draw [very thick] (0,0) rectangle (1,1);
\draw [very thick] (0,1) rectangle (1,2);
\draw [very thick] (0,2) rectangle (1,3);
\draw [very thick,fill=red] (1,0) rectangle (2,1);
\draw [very thick,fill=red] (1,1) rectangle (2,2);
\draw [very thick,fill=red] (1,2) rectangle (2,3);
\draw [very thick,fill=red] (2,0) rectangle (3,1);
\draw [very thick,fill=red] (2,1) rectangle (3,2);
\draw [very thick,fill=red] (2,2) rectangle (3,3);

\draw [very thick,fill=red] (6,0) rectangle (7,1);
\draw [very thick,fill=red] (6,1) rectangle (7,2);
\draw [very thick,fill=blue] (6,2) rectangle (7,3);
\draw [very thick,fill=red] (7,0) rectangle (8,1);
\draw [very thick,fill=blue] (7,1) rectangle (8,2);
\draw [very thick,fill=red] (7,2) rectangle (8,3);
\draw [very thick,fill=red] (8,0) rectangle (9,1);
\draw [very thick] (8,1) rectangle (9,2);
\draw [very thick] (8,2) rectangle (9,3);

\draw (0.5,3.5) node {0};
\draw (1.5,3.5) node {1};
\draw (2.5,3.5) node {2};
\draw (6.5,3.5) node {0};
\draw (7.5,3.5) node {1};
\draw (8.5,3.5) node {2};

\draw (-0.5,0.5) node {2};
\draw (-0.5,1.5) node {1};
\draw (-0.5,2.5) node {0};
\draw (5.5,0.5) node {2};
\draw (5.5,1.5) node {1};
\draw (5.5,2.5) node {0};

\draw (1.5,4.5) node {$(0,y,z)$};
\draw (7.5,4.5) node {$(1,y,z)$};
\end{tikzpicture}
\end{center}
  \caption{An $(18,13,4)$ signed difference set in $\Znum_2 \times \Znum_3 \times \Znum_3$,
Elements in $P$ are red, elements of $N$ are blue.}
  \label{fig:233}
\end{figure}
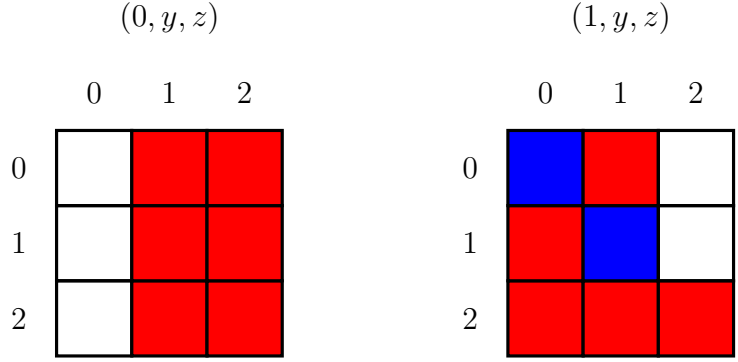

\begin{proof}
Figure~\ref{fig:233} shows this signed difference set, with the two
slices $(0,y,z)$ and $(1,y,z)$ drawn next to each other.  To
demonstrate that it is a signed difference set, we need to show that
for any $(a,b,c)\neq (0,0,0) \in G$, we have

\begin{equation}\label{eq:233}
\sum_{{(x,y,z)} \in G} s_{(x,y,z)} s_{{(x,y,z)} + (a,b,c)} = 4.  
\end{equation}

First consider the case $a=1$.  Since $s_{(0,y,z)}=1$ for $y=1,2$ and
zero otherwise, (\ref{eq:233}) becomes
$$
 \sum_{y=1}^2  \sum_{z=0}^2 s_{(1,y+b,z+c)} + s_{(1,y-b,z-c)}.
$$
All three columns of the right side of Figure~\ref{fig:233} sum
to one, the inner sums over $z$ are each one, and (\ref{eq:233}) follows.

Now consider $a=0$.  The contribution from $x=0$ will be 6 if $b=0$,
and 3 otherwise (in the former case the sum over $z$ is the dot
product of each of the
$y=1$ and $y=2$ columns with itself, and in the latter the dot product
of one with the other).

If $b=0$, the contribution from $x=1$ will be $-2$.  Since $(a,b,c)
\neq (0,0,0)$,
$c \neq 0$, so we are
taking the dot product of each column with a shift of itself, giving
$-1, -1$ and $0$.

If $b \neq 0$ we need to show that the contribution from $x=1$ is $1$.
There are three cases:

\begin{description}
  \item [$c=0$] dot products of adjacent columns, giving $-1, 1$ and $1$.  
  \item [$c=b$] dot products of the shifted adjacent columns are $3, -1$ and $-1$.
  \item [$c=-b$] dot products of adjacent columns shifted the other
    way are $-1,1$ and $1$.
\end{description}
\end{proof}

\section*{Data Availability}

The datasets generated during this research are available at the
author's website: \url{https://dmgordon.org}.

\bibliography{sds}
\bibliographystyle{plain}
\end{document}